\theoremstyle{plain}
\newtheorem{theorem}{Theorem}
\theoremstyle{definition}
\newcommand{\keywords}{\textbf{Key words. }\medskip}
\newcommand{\subjclass}{\textbf{MSC 2010. }\medskip}
\renewcommand{\abstract}{\textbf{Abstract. }\medskip}
\begin{document}

\sloppy

\title{\bf Estimates of deviations of Fourier sums\\ on Weyl--Nagy classes \boldmath{$W^r_{\beta,1}$}}
\author{\bf A.S.~Serdyuk, I.V.~Sokolenko}
\date{\ }

\maketitle

\begin{abstract}
We establish estimates for exact upper bounds of deviations of partial Fourier sums $S_{n-1}(f)$ on classes $W^r_{\beta,1}, r>2, \beta\in\mathbb{R},$ of $2\pi$-periodic functions whose $(r,\beta)$-derivatives in the Weyl--Nagy sense belong to the unit ball of the space $L_1$. The specified estimates allow us to write asymptotic equalities for the quantities \mbox{$\sup\limits_{f\in W^r_{\beta,1}}|f(x)-S_{n-1}(f;x)|$} as \mbox{$n\rightarrow\infty$,} $r\rightarrow\infty$ for arbitrary relations between the parameters $r$ and $n$.
\end{abstract}

\subjclass{42A10.}

\keywords{Fourier sum, Weil-Nagy class, asymptotic equality.}

\bigskip
\begin{flushright}
	\textit{Dedicated to the bright memory of \\ Vitaly Pavlovich Motorny}
\end{flushright}

\section{Introduction}
Let   $L_{p}$,
$1\le  p<\infty$, be a space of $2\pi$--periodic functions $\varphi$ sum\-mable  with $p$th power  on the interval $[-\pi,\pi)$ with the norm
\begin{equation}
	\|\varphi\|_{p}=\bigg(\int\limits_{-\pi}^{\pi}|\varphi(t)|^pdt\bigg)^{1/p},
\end{equation}
let $L_{\infty}$ be a space of measurable and essentially bounded   $2\pi$--periodic functions  $\varphi$ with the norm given by the equality
\begin{equation}
	\|\varphi\|_{L_\infty}=\mathop {\rm esssup}\limits_{t} |f(t)|,
\end{equation}
and let $C$ be a space of continuous $2\pi$--periodic functions  $\varphi$ with the norm
\begin{equation}
	\|\varphi\|_{C}=\max\limits_{t}|\varphi(t)|.
\end{equation}

Further, let $W^r_{\beta,p},$ $ r>0,\ \beta\in\mathbb{R},\ 1\le p\le \infty,$   be classes of  $2\pi$-periodic functions $f$ that can be represented in the form of convolution
\begin{equation}\label{1}
	f(x)=
	\frac{a_0}{2}+\frac{1}{\pi}\int\limits_{-\pi}^{\pi}
	\varphi(x-t) B_{r,\beta}(t)dt, \ \ \ a_0\in\mathbb R, \quad  \int\limits_{-\pi}^{\pi}\varphi(t) dt=0,
\end{equation}
with Weyl-Nagy kernels $B_{r,\beta}$  of the form
\begin{equation}\label{2}
	B_{r,\beta}(t)=\sum\limits_{k=1}^\infty k^{-r}\cos\left(kt-\frac{\beta\pi}2\right),\quad r>0,\quad \beta\in\mathbb{R},
\end{equation}
of function $\varphi$ satisfying the condition $\varphi\in B_p$, where
\begin{equation}\label{3}
	B_p:=\left\{\varphi\in L_p: \|\varphi\|_p\le1\right\},\quad1\le p \le\infty.
\end{equation}

The classes $W^r_{\beta,p}$ are called Weyl-Nagy classes (see, for example, \cite{Stepanets1995, Stepanets2005, Stechkin1956}), and the function $\varphi$ in the representation \eqref{1} is called the $(r,\beta)$-derivative of the function $f$ in the Weyl-Nagy sense  and denoted by $f^r_\beta$.

If $r\in\mathbb N$ and $ \beta=r,\ $ then functions of the form (\ref{2}) are well-known Bernoulli kernels and the corresponding classes $W^r_{\beta,p}$ coincide with the known classes $W^r_{p}$ of $2\pi$-periodic functions $f$ which have absolutely continuous derivatives up to $(r-1)$-th order, inclusivly, such that $\|f^{(r)}\|_p\le1$. Moreover, the equality $f^{(r)}(x)=f^r_r(x), \ r\in\mathbb{N}$ is true almost everywhere. If $r>0$ and $\beta=r$, then the classes $W^r_{\beta,p}$ are known Weyl classes $W^r_p$.

For all $1<p,s<\infty$, $r>(\frac1p-\frac1s)_+=\left\{\begin{array}{ll}
	\frac1p-\frac1s,& p<s,\\
	0,& p\ge s,
\end{array}\right.$ and $\beta\in\mathbb{R}$ the embedding $W^r_{\beta,p}\subset L_s$ is true. In addition, for any $1\le p\le\infty$, $r>\frac1p,$ $\beta\in\mathbb{R}$ we have the embeddings $W^r_{\beta,p}\subset C$ (see, for example, \cite[\S5.4]{Stepanets1995}, \cite[\S6.6]{Stepanets2005}). In this paper we consider the case $r>2$ and, therefore, the Weyl-Nagy classes in this situation consist only of continuous functions.

Let $f\in L$. By $S_{n-1}(f)=S_{n-1}(f;x)$ we denote the partial Fourier sum of order $n-1$ of the function $f$:
\begin{equation}\label{3a}
	S_{n-1}(f;x)=\frac{a_0}2+ \sum\limits_{k=1}^{n-1}(a_k\cos kx+b_k\sin kx),
\end{equation}
where $a_k$ and $b_k$ are the Fourier coefficients of the function $f$:
$$
a_k=\frac1\pi\int\limits_{-\pi}^{\pi}f(t)\cos ktdt,\quad b_k=\frac1\pi\int\limits_{-\pi}^{\pi}f(t)\sin ktdt.
$$
Let us denote by $\rho_n(f;x) $ the deviation from the function $f(x)$ of its partial Fourier sum $S_{n-1}(f;x)$, i.e.
$$
\rho_n(f;x) :=f(x)-S_{n-1}(f;x).
$$

Let $\mathfrak N$ be some functional class from the space $C$.
The quantity
\begin{equation}\label{4}
	{\cal E}_n(\mathfrak N):=\sup\limits_{f\in \mathfrak N}|\rho_n(f;x)|=\sup\limits_{f\in \mathfrak N}|f(x)-{S}_{n-1}(f;x)|
\end{equation}
is considered and the problem of finding asymptotic equality for the quantity $ {\cal E}_n(\mathfrak N)$  is investigated as $n\rightarrow\infty$. Note that if the class $\mathfrak{N}$ is invariant with respect to the shift of the argument (i.e., if $f(x)$ belongs to the class $\mathfrak{N}$, then for any $h\in\mathbb{R}$ the function $f_h(x)=f(x+h)$ also belongs to the class $\mathfrak{N}$), then the quantity $ {\cal E}_n(\mathfrak N)$ of the form \eqref{4} does not depend on the value of $x$.

In this paper we investigate the asymptotic behavior of quantities $ {\cal E}_n(\mathfrak N)$ of the form \eqref{4} in the case when the Weyl-Nagy classes $W^r_{\beta,1}, r>2, \beta\in\mathbb{R}$ play the role of $\mathfrak{N}$. The central result is theorem \ref{3t} (see section 3), which allows us to obtain uniform asymptotic equalities for the quantities ${\cal E}_{n}(W^r_{\beta,1})$ as $n\rightarrow\infty$ and $r\rightarrow\infty$ depending on the relations between the parameters $r$ and $n$.

\section{Historical review}

The problem of finding the order of decrease of quantities of the form \eqref{4} on classes of differentiable functions was actually investigated by H.~Lebesgue \cite{1910-Lebesgue}. Asymptotic equalities as $n\rightarrow\infty$ for the quantities ${\cal E}_{n}(W^r_{\beta,\infty}), r\in\mathbb{N},$ were first established by A.M. Kolmogorov \cite{Kolmogorov1935}. Kolmogorov's research was continued by numerous researchers, including V.T.~Pinkevich \cite{Pinkevich1940}, S.M.~Nikol'skii \cite{Nikol'skii1941,Nikol'skii1946}, A.V.~Efimov \cite{Efimov1960}, S.B.~Stechkin \cite{Stechkin1956,Stechkin1980}, S.O.~Telyakovskii \cite{Telyakovskii1968}, O.I.~Stepanets (see monographs \cite{Stepanets1995,Stepanets2005} and the bibliography given in them), V.P.~Motorny \cite{Motornyi1974} and others.

For the Weyl-Nagy classes $W^r_{\beta,\infty}$ for all fixed $r>0$ and $\beta\in\mathbb{R}$ the asymptotic equality
\begin{equation}\label{4'}
	{\cal E}_{n}(W^r_{\beta,\infty})=\frac4{\pi^2}\frac{\ln n}{n^r}+O\left(\frac1{n^r}\right)
\end{equation}
holds as $n\rightarrow\infty$ (see, for example, \cite[\S3.7]{Stepanets1995}, \cite[\S5.3]{Stepanets2005}, \cite{Stechkin1980}). In the formula \eqref{4'} the quantity $O$ is uniformly bounded in the parameter $n$ but may depend on $r$ and $\beta$.

The rate of decrease of remainder term in the esyimate \eqref{4'} depending on the parameters $r$ and $\beta$ was studied in the works
\cite{Natanson1961,Selivanova1955,Sokolov1955,Stechkin1980,Telyakovskii1968, Telyakovskii1989, Telyakovskii1994} and others.

In particular, in the work of S.A.~Telyakovskii \cite{Telyakovskii1968} a uniform in  $n\in\mathbb{N}$, $r>0$ and $\beta\in\mathbb{R}$  estimate
\begin{equation}\label{4d}
	{\cal E}_{n}(W^r_{\beta,\infty})=\frac4{\pi^2}\dfrac{1}{n^r}\ln\frac{n}{\min\{n,r+1\}}+\dfrac{2}{\pi r}\left|\sin\frac{\beta\pi}{2}\right|\dfrac{1}{n^r}+\mathcal{O}\left(\frac1{n^r}\right)
\end{equation}
was established. This estimate is the asymptotic equality under the condition $r=o(n)$ as $n\rightarrow\infty$. If this condition is not met, then the formula \eqref{4d} provides only an upper order estimate.

Asymptotic equalities for the quantities ${\cal E}_{n}(W^r_{\beta,\infty})$ as $n\rightarrow\infty$ and $r\rightarrow\infty$ and for all possible relations between the parameters $r$ and $n$ were established by S.B.~Stechkin \cite{Stechkin1980}. Namely, for $r\ge1$ and $\beta\in\mathbb{R}$ he proved that
\begin{equation}\label{6}
	{\cal E}_{n}(W^r_{\beta,\infty})=\frac8{\pi^2}\frac{\mathbf{K}(e^{-r/n})}{n^r}+\dfrac{\mathcal{O}(1)}{rn^r},
\end{equation}
where
\begin{equation}\label{7}
	\mathbf{K}(q)=\int\limits_{0}^{\pi/2}\frac{dt}{\sqrt{1-q^2\sin^2t}}
\end{equation}
is the complete elliptic integral of the first kind and $\mathcal{O}(1)$ is a quantity uniformly bounded in all considered parameters.

The investigations of the exact or asymptotically exact behavior of the quantities ${\cal E}_{n}(W^r_{\beta,p})$ for $p\in[1,\infty)$ were carried out in the works \cite{Babenko-Pichugov1980,Serdyuk_Sokolenko2011,Serdyuk_Sokolenko2013,Serdyuk_Sokolenko2019MFAT,Serdyuk_Sokolenko2020UMB,Serdyuk_Sokolenko2022UMJ} and others.

In this work we consider the case $p=1$ and continue  our research \cite{Serdyuk_Sokolenko2019MFAT} and \cite{Serdyuk_Sokolenko2022UMJ}, with which it is closely related. Thus, in the work \cite{Serdyuk_Sokolenko2019MFAT}  it was shown, in particular, that for $r\ge n+1$  a uniform on all   parameters estimate
\begin{equation}\label{4g}
	{\cal E}_{n}(W^r_{\beta,1})=
	\frac1{n^r}\left(\frac1{\pi} +\mathcal{O}(1)\left(1+\frac1n\right)^{-r}\right)
\end{equation}
holds (see Theorem 1 for $p=1$ from \cite{Serdyuk_Sokolenko2019MFAT}).

Later in the work \cite{Serdyuk_Sokolenko2022UMJ} it was proved that under the condition \begin{equation}\label{4z}
	\sqrt{n}+1\le r\le n+1
\end{equation}
the estimate
\begin{equation}\label{4k}
	{\cal E}_{n}(W^r_{\beta,1})=
	\frac1{n^r}\left( \frac1{\pi(1-e^{-r/n})}
	+\mathcal{O}(1)\frac n{r^{2}}\right),
\end{equation}
holds (see Theorem 1 for $p=1$ from \cite{Serdyuk_Sokolenko2022UMJ})
and under the condition
\begin{equation}\label{4l}
	n+1\le r\le n^2
\end{equation}
the estimate
\begin{equation}\label{4m}
	{\cal E}_{n}(W^r_{\beta,1})=
	\frac1{n^r}\left( \frac1{\pi(1-e^{-r/n})}
	+\mathcal{O}(1)\dfrac r{n^2}e^{-r/n}\right).
\end{equation}
holds  (see Theorem 2 for $p=1$ from \cite{Serdyuk_Sokolenko2022UMJ}).
In formulas \eqref{4k} and \eqref{4m} the quantities $\mathcal{O}(1)$ are uniformly bounded in all considered parameters.

The case
\begin{equation}\label{4n}
	r\ge n^2
\end{equation}
is covered by the formula \eqref{4g}. However, with the indicated relations between $r$ and $n$, it can be written in a form similar to \eqref{4k} and \eqref{4m}. Indeed, under the condition \eqref{4n}, the following formulas
\begin{equation}\label{4o'}
	\dfrac1{1-e^{-r/n}}=1+\mathcal{O}(1)e^{-r/n}
\end{equation}
and
\begin{equation}\label{4o}
	e^{-r/n}\le \left(1+\frac1n\right)^{-r}\le e^{-r/(n+1)}
\end{equation}
hold.

Then, by virtue of \eqref{4g}, \eqref{4o'} and \eqref{4o}, we have that, under the condition \eqref{4n}, the estimate
\begin{equation}\label{4r}
	{\cal E}_{n}(W^r_{\beta,1})=
	\frac1{n^r}\left( \frac1{\pi(1-e^{-r/n})}
	+\mathcal{O}(1)\left(1+\frac1n\right)^{-r}\right),
\end{equation}
is true. Here, the quantity $\mathcal{O}(1)$ is uniformly bounded in all considered parameters.

The main approach, which allowed us to obtain the estimates \eqref{6}, \eqref{4k}, \eqref{4m} and \eqref{4r}, consists in approximating in $L_{p'}$-metrics the remainder $\rho_n(B_{r,\beta};\cdot)$ of Weyl-Nagy kernels of the form \eqref{2} by the corresponding remainder $\rho_n(P_{q,\beta};\cdot)$ of Poisson kernels
\begin{equation}\label{!}
	P_{q,\beta}(t)=\sum\limits_{k=1}^\infty q^{-k}\cos\left(kt-\frac{\beta\pi}2\right),\quad q\in(0,1),\quad \beta\in\mathbb{R}.
\end{equation}
As a result of using this approach, which was first proposed by S.B.~Stechkin in the work \cite{Stechkin1980} for $p=\infty$, the problem of strong asymptotics of the quantities ${\cal E}_{n}(W^r_{\beta,p})$ is reduced to the corresponding problem for the quantities ${\cal E}_{n}(C^q_{\beta,p})$, where $C^q_{\beta,p},\ q\in(0,1),\ \beta\in\mathbb{R},\ 1\le p\le\infty,$ are classes of Poisson integrals of the form
\begin{equation}\label{!!}
	C^q_{\beta,p}=\left\{ f(x)= \frac{a_0}{2}+\frac{1}{\pi}\int\limits_{-\pi}^{\pi}
	\varphi(x-t) P_{q,\beta}(t)dt, \ a_0\in\mathbb R, \int\limits_{-\pi}^{\pi}\varphi(t) dt=0, \varphi\in B_p\right\},
\end{equation}
where the kernels $P_{q,\beta}(\cdot)$ are denoted by the equality \eqref{!}, and the sets $B_p$ are denoted by the equality \eqref{3}.

As for the problem of finding the asymptotics of quantities of the form \eqref{4} for classes of Poisson integrals $C^q_{\beta,p}$ and their generalizations, it has been completely solved thanks to the works of \cite{Nikol'skii1946,Serdyuk_2005_8,Serdyuk_2005_10,Serdyuk_Stepanyuk2019Anal,Serdyuk_Stepanyuk2023UMJ,Stechkin1980}.

At the same time, the case
\begin{equation}\label{4s}
	2<r\le \sqrt n+1
\end{equation}
has remained unexplored until now.

\section{Main Results}

The following statement allows us to write the principal term of the asymptotic expansion of the quantity ${\cal E}_{n}(W^r_{\beta,1})$ as $r\rightarrow\infty$ and $r/n\rightarrow0$.  This case also fully covers the relation \eqref{4s} as $r\rightarrow\infty$.

\begin{theorem}\label{1t}
	Let $r>2,$ $\beta\in\mathbb R$ and $n\in\mathbb{N}$. Then the formula
	\begin{equation}\label{1t1}
		{\cal E}_{n}(W^r_{\beta,1})=\frac1{n^r}\frac nr\left(
		\frac1{\pi} +\bar\theta\left(\frac{1}{r-2}+\frac rn\right)\right),
	\end{equation}
	is true. Here $\bar\theta=\bar{\theta}_{r,n,\beta}\in\left(-\frac2\pi,\frac2\pi\right)$.
\end{theorem}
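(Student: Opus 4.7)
The plan is a duality reduction followed by a Stechkin-type comparison of the Weyl--Nagy remainder against a Poisson remainder with parameter $q:=e^{-r/n}$, similar in spirit to the method used in \cite{Serdyuk_Sokolenko2022UMJ} but tuned to expose the $1/(r-2)$ singularity in the new regime where $r/n$ may be small.

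\textbf{Duality.} Any $f\in W^r_{\beta,1}$ is of the form $f=a_0/2+(1/\pi)(\varphi\ast B_{r,\beta})$ with $\varphi\in B_1$, so $f-S_{n-1}(f)=(1/\pi)(\varphi\ast\rho_n(B_{r,\beta};\cdot))$, and the $L_1$--$L_\infty$ duality gives $\mathcal E_n(W^r_{\beta,1})=(1/\pi)\|\rho_n(B_{r,\beta};\cdot)\|_C$. It therefore suffices to prove
\[
\|\rho_n(B_{r,\beta};\cdot)\|_C = \frac{n}{rn^r}\Bigl(1+\pi\bar\theta\bigl(\tfrac{1}{r-2}+\tfrac{r}{n}\bigr)\Bigr),\qquad|\bar\theta|<2/\pi.
\]

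\textbf{Poisson split.} Set $q:=e^{-r/n}$ and decompose $\rho_n(B_{r,\beta};t)=n^{-r}\Psi(t)+R(t)$, where $\Psi(t):=\sum_{k\ge n}q^{k-n}\cos(kt-\beta\pi/2)$ and $R(t):=\sum_{k\ge n}(k^{-r}-n^{-r}q^{k-n})\cos(kt-\beta\pi/2)$. The geometric sum gives the closed form $\Psi(t)=\mathrm{Re}\bigl[e^{i(nt-\beta\pi/2)}/(1-qe^{it})\bigr]$. Evaluating at $t^\ast$ near $\beta\pi/(2n)$ (which nulls the cosine phase) and combining $1-\cos t^\ast=O(1/n^2)$ with $(1-q)^2\ge(r/n)^2(1-O(r/n))^2$ (valid because $r>2$) yields $\|\Psi\|_C=(1-q)^{-1}(1+O(1/r^2))$. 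The Bernoulli expansion $x/(1-e^{-x})=1+x/2+O(x^2)$ then turns $n^{-r}\|\Psi\|_C$ into $n/(rn^r)+1/(2n^r)+O(r/(n\cdot n^r))$, supplying the leading term together with part of the $\bar\theta/n^r$ contribution.

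\textbf{Remainder.} For $R$, I would use the elementary inequalities $0\le(1+x)^{-r}-e^{-rx}\le(rx^2/2)(1+x)^{-r}$ valid for $x\ge0$ (the lower bound from $\ln(1+x)\le x$; the upper from $1-e^{-z}\le z$ applied with $z=r(x-\ln(1+x))\le rx^2/2$). Substituting $x=m/n$ and $k=n+m$ gives
\[
\|R\|_C \;\le\; \frac{r\,n^{-r}}{2n^2}\sum_{m=0}^\infty m^2(1+m/n)^{-r}.
\]
For $r>3$ the integral $\int_0^\infty u^2(1+u)^{-r}\,du=2/[(r-1)(r-2)(r-3)]$ converts this into $\|R\|_C=O(n/[r(r-2)n^r])$. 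For $2<r\le3$ the integral diverges, so I would switch to the crude bound $\|R\|_C\le n^{-r}\sum_m(1+m/n)^{-r}=O(n/[(r-1)n^r])$; since $r(r-2)\le2(r-1)$ on $r\le 2+\sqrt2$, this still fits inside $O(n/[r(r-2)n^r])$ with the factor-of-two slack that $|\bar\theta|<2/\pi$ allows. The two-sided estimate $n^{-r}\|\Psi\|_C-\|R\|_C\le\|\rho_n(B_{r,\beta})\|_C\le n^{-r}\|\Psi\|_C+\|R\|_C$ and the repackaging of the errors $O(1/n^r)$, $O(r/(nn^r))$, $O(n/[r(r-2)n^r])$ into the single form $\bar\theta(n/(rn^r))(1/(r-2)+r/n)$ then close the argument.

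\textbf{Main obstacle.} The theorem demands the \emph{explicit} universal bound $|\bar\theta|<2/\pi$, not merely $\bar\theta=O(1)$. Tracking every multiplicative constant --- through the phase-alignment estimate for $\|\Psi\|_C$, the integral bound in the range $r>3$, and the transition to the crude estimate on $2<r\le3$ --- so that the three sources combine to strictly less than $2/\pi$ after division by $\pi$ will be the main bookkeeping challenge. A secondary difficulty appears in the lower estimate: one must verify that evaluating $R$ at the maximiser of $\Psi$ produces an error genuinely bounded by $\|R\|_C$ with the same explicit constant, a point that requires pointwise rather than merely norm-level control of the Poisson approximation.
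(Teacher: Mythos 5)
Your proposal diverges completely from the paper's route: the paper does not touch kernels, duality, or Poisson comparison at all. It simply quotes the two-sided formula \eqref{1td1} from \cite{Serdyuk_Stepanyuk2023UMJ}, namely ${\cal E}_{n}(W^r_{\beta,1})=\frac{1}{\pi}\bigl(\sum_{k\ge n}k^{-r}+\frac{\theta}{n}\sum_{k\ge1}k(k+n)^{-r}\bigr)$ with $\theta\in[-1,0]$, and then reduces the two sums by elementary integral comparisons, tracking the bounded factors $\theta_1,\dots,\theta_4$ explicitly so that the final $\bar\theta$ lands in $(-2/\pi,2/\pi)$. Your plan re-derives a statement of this strength from scratch, which is a much heavier undertaking, and as written it has two genuine gaps.

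First, the duality identity at the base of your argument is false. Because the class is defined with $\int_{-\pi}^{\pi}\varphi\,dt=0$, the supremum of $\frac1\pi|(\varphi*g)(x)|$ over $\varphi\in B_1$ with zero mean equals $\frac1\pi\inf_{c\in\mathbb R}\|g-c\|_C=\frac1{2\pi}\bigl(\max_t g(t)-\min_t g(t)\bigr)$, not $\frac1\pi\|g\|_C$. These genuinely differ: for $n=1$, $\beta=0$ one gets $\frac1\pi(1-2^{-r})\zeta(r)$ versus your $\frac1\pi\zeta(r)$ (and only the former is consistent with \eqref{1td1}). The error is asymptotically absorbable here, but it breaks your lower bound as written: aligning the phase of $\Psi$ to show $\|\rho_n(B_{r,\beta})\|_C$ is large does not bound ${\cal E}_n$ from below; you must separately produce points where $\rho_n(B_{r,\beta})$ is close to $+M$ and to $-M$. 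Second, and more seriously for this particular theorem, the statement is not an $\mathcal O(1)$ claim: it asserts the explicit universal constant $|\bar\theta|<2/\pi$. Your sketch establishes (at best) the correct orders of the three error sources and then defers "tracking every multiplicative constant" as the "main bookkeeping challenge" --- but that bookkeeping \emph{is} the theorem. The budget is tight: the $+\tfrac12 n^{-r}$ term from $x/(1-e^{-x})=1+x/2+\dots$ alone consumes a quarter of the $2n^{-r}$ allowance before division by $\pi$, and you have given no argument that the phase-alignment loss, the remainder bound for $R$ (including the factor-of-two patch on $2<r\le 2+\sqrt2$), and the duality correction jointly fit under $2/\pi$. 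Until those constants are actually computed, the proposal is a plausible programme, not a proof.
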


\begin{proof}[Proof of Theorem~\ref{1t}]
	For $r>2$ from Theorem 2.2 of the work \cite{Serdyuk_Stepanyuk2023UMJ} it follows that
	\begin{equation}\label{1td1}
		{\cal E}_{n}(W^r_{\beta,1})=
		\frac{1}{\pi} \left(\sum\limits_{k=n}^{\infty}\frac1{k^{r}}+ \frac{\theta}{n} \sum\limits_{k=1}^{\infty}\frac k{(k+n)^r}\right),
	\end{equation}
	where $\theta\in[-1,0]$.
	
	Because
	\begin{equation}\label{1td2}
		\frac n{r-1}\frac1{n^r}= \int\limits_n^\infty\frac1{t^r}dt<\sum\limits_{k=n}^{\infty}\frac1{k^r}<\frac1{n^r}+\int\limits_n^\infty\frac1{t^r}dt=\left(1+\frac n{r-1}\right)\frac1{n^r},
	\end{equation}
	then
	\begin{equation}\label{1td4}
		\sum\limits_{k=n}^{\infty}\frac1{k^r}=\frac1{n^r}\left(\frac n{r-1}+\theta_1\right),\quad\theta_1\in[0,1].
	\end{equation}
	
	Taking into account the estimate \eqref{1td4} and the following equalities
	\begin{equation}\label{1td5}
		\frac1n\sum\limits_{k=1}^{\infty}\frac k{(k+n)^r}=\frac1n\sum\limits_{k=0}^{\infty}\frac {k+n}{(k+n)^r}-\sum\limits_{k=0}^{\infty}\frac 1{(k+n)^r}=
		\frac1n\sum\limits_{k=n}^{\infty}\frac 1{k^{r-1}}-\sum\limits_{k=n}^{\infty}\frac 1{k^r},
	\end{equation}
	we obtain estimates
	\begin{equation}
		\frac1n\sum\limits_{k=n}^{\infty}\frac 1{k^{r-1}}=\frac1n\frac1{n^{r-1}}\left(\frac n{r-2}+\theta_1\right)=\frac1{n^r}\left(\frac n{r-2}+\theta_1\right),\quad\theta_1\in[0,1],
	\end{equation}
	\begin{equation}\label{1td6}
		\frac1n\sum\limits_{k=n}^{\infty}\frac 1{k^{r-1}}-\sum\limits_{k=n}^{\infty}\frac 1{k^r}=\frac1{n^r}\left(\frac n{r-2}-\frac n{r-1}+\theta_2\right)
		=\frac1{n^r}\left(\frac n{(r-1)(r-2)}+\theta_2\right),
	\end{equation}
	$\theta_2\in[-1,1]$.
	
	Combining the formulas \eqref{1td4}--\eqref{1td6} allows you to write
	$$
	\sum\limits_{k=n}^{\infty}\frac 1{k^{r}}+\frac{\theta}n\sum\limits_{k=1}^{\infty}\frac k{(k+n)^r}
	=\frac1{n^r}\left(\frac n{r-1}+\theta_1\right)+\frac{\theta}{n^r}\left(\frac n{(r-1)(r-2)}+\theta_2\right)
	=
	$$
	\begin{equation}\label{1td7}
		=\frac1{n^r}\left(\frac n{r-1}+\frac {\theta n}{(r-1)(r-2)}+\theta_1+\theta\theta_2\right)
		=\frac1{n^r}\left(\frac n{r-1}\left(1+\frac \theta{r-2}\right)+\theta_3\right),
	\end{equation}
	where $\theta_3\in[-1,2]$.
	
	Because
	$$
	\frac n{r-1}=\frac nr-\frac n{r(r-1)},
	$$
	then
	\begin{equation}\label{12a}
		\frac n{r-1}\left(1+\frac \theta{r-2}\right)=\frac nr-\frac n{r(r-1)}+\frac {n\theta}{r(r-2)}-\frac{n\theta}{r(r-1)(r-2)}=\frac nr(1+A_{n,r}),
	\end{equation}
	where
	\begin{equation}\label{12b}
		A_{n,r}=A_{n,r}(\theta):=-\frac1{r-1}+\frac {\theta}{r-2}-\frac{\theta}{(r-1)(r-2)},\quad\theta\in[-1,0].
	\end{equation}
	
	Due to \eqref{12b}
	$$
	A_{n,r}\le-\frac1{r-1}+\frac1{(r-1)(r-2)}=-\frac1{r-1}+\frac1{r-2}-\frac1{r-1}\le\frac1{r-2},
	$$
	$$
	A_{n,r}\ge-\frac1{r-1}-\frac1{r-2}>-\frac2{r-2},
	$$
	and therefore
	\begin{equation}\label{12v}
		A_{n,r}=\frac{\theta_4}{r-2},\quad\theta_4\in(-2,1).
	\end{equation}
	
	From formulas \eqref{12a} and \eqref{12v} we obtain
	\begin{equation}\label{12g}
		\frac n{r-1}\left(1+\frac\theta{r-2}\right)=\frac nr\left(1+\frac{\theta_4}{r-2}\right),\quad\theta_4\in(-2,1).
	\end{equation}
	
	Taking into account \eqref{12g}, we have
	\begin{equation}\label{12d}
		\frac1{n^r}\left(\frac n{r-1}\left(1+\frac\theta{r-2}\right)+\theta_3\right)=\frac1{n^r}\left(\frac nr\left(1+\frac{\theta_4}{r-2}\right)+\theta_3\right),
	\end{equation}
	$\theta_3\in[-1,2],\ \theta_4\in(-2,1).$
	
	Therefore, from \eqref{1td1}, \eqref{1td7} and \eqref{12d}, it follows that
	\begin{equation}\label{12g'}
		{\cal E}_{n}(W^r_{\beta,1})=
		\frac 1{\pi n^r}\left(\frac{n}{r}\left(1+\frac{\theta_4}{r-2}\right)+\theta_3\right),\quad\theta_3\in[-1,2],\ \theta_4\in(-2,1),
	\end{equation}
	and, consequently, a somewhat rougher in comparison with \eqref{12g'} estimate  \eqref{1t1} follows.
	Theorem \ref{1t} is proved.
\end{proof}

The uniform in all parameters estimate \eqref{1t1} is an asymptotic equality as $r\rightarrow\infty$ and $r/n\rightarrow0$.

Since
\begin{equation}\label{1td10}
	\frac1{1-e^{-x}}= \frac1x+\mathcal{O}(1),\quad x>0,
\end{equation}
then
\begin{equation}\label{1td10}
	\frac1{1-e^{-r/n}}= \frac nr+\mathcal{O}(1).
\end{equation}

Furthermore, if the condition \eqref{4s} is satisfied, then
\begin{equation}\label{33a}
	\dfrac n{r(r-2)}\ge\dfrac{(r-1)^2}{r(r-2)}>1.
\end{equation}

Taking into account \eqref{1td10} and \eqref{33a}, from the formula \eqref{1t1} of Theorem \ref{1t} we obtain the following statement.

\begin{theorem}\label{2t}
	Let $r>2,$ $\beta\in\mathbb R$ and $n\in\mathbb{N}$. Then, under the condition $2<r\le \sqrt{n}+1$, the equality
	\begin{equation}\label{2t1}
		{\cal E}_{n}(W^r_{\beta,1})= \frac1{n^r}\left(\frac1{\pi(1-e^{-r/n})}
		+\mathcal{O}(1)\dfrac n{r(r-2)}\right),
	\end{equation}
	is true. Here, the quantity $\mathcal{O}(1)$ is uniformly bounded in all considered parameters.
\end{theorem}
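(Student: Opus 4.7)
The plan is to derive Theorem~\ref{2t} directly from the already-established Theorem~\ref{1t}, using only the elementary asymptotic relation~\eqref{1td10} and the numerical inequality~\eqref{33a}. First I would distribute the factor $\frac{n}{r}$ inside the parentheses of formula~\eqref{1t1}, rewriting it as
\begin{equation*}
{\cal E}_{n}(W^r_{\beta,1})=\frac{1}{n^r}\left(\frac{n}{\pi r}+\bar\theta\left(\frac{n}{r(r-2)}+1\right)\right),
\end{equation*}
so that the candidate principal term $\frac{n}{\pi r}$ is visually separated from a remainder consisting of a piece of order $\frac{n}{r(r-2)}$ together with a bounded contribution coming from the $+1$.

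Next I would convert the leading factor $\frac{n}{\pi r}$ into the Poisson-type expression $\frac{1}{\pi(1-e^{-r/n})}$. Applying identity~\eqref{1td10} with $x=r/n$ yields $\frac{n}{r}=\frac{1}{1-e^{-r/n}}+\mathcal{O}(1)$, hence
\begin{equation*}
\frac{n}{\pi r}=\frac{1}{\pi(1-e^{-r/n})}+\mathcal{O}(1),
\end{equation*}
where the implied constant is absolute.

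The final step is to absorb the leftover additive $\mathcal{O}(1)$ contributions (one from the previous display and the one proportional to $\bar\theta\cdot 1$) into the prescribed remainder $\mathcal{O}(1)\frac{n}{r(r-2)}$. This is precisely the point at which the hypothesis $2<r\le\sqrt{n}+1$ is used: inequality~\eqref{33a} gives $\frac{n}{r(r-2)}\ge\frac{(r-1)^2}{r(r-2)}>1$ under that restriction, so any quantity uniformly bounded in the parameters is in turn dominated by $\frac{n}{r(r-2)}$ up to an absolute multiplicative constant. Collecting terms then delivers~\eqref{2t1}.

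I do not expect any substantive obstacle, because Theorem~\ref{1t} and the elementary estimates~\eqref{1td10} and~\eqref{33a} already do all the real work. The only point that requires mild care is verifying that uniformity of the various $\mathcal{O}(1)$ constants in $r,n,\beta$ is preserved in the combination; this is immediate since $\bar\theta\in(-\frac{2}{\pi},\frac{2}{\pi})$ is bounded by an absolute constant in Theorem~\ref{1t}, and the remainder in~\eqref{1td10} is uniform for $x>0$.
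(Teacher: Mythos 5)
Your proposal is correct and follows essentially the same route as the paper, which obtains Theorem~\ref{2t} directly from formula~\eqref{1t1} of Theorem~\ref{1t} by combining the relation~\eqref{1td10} with the inequality~\eqref{33a}; you merely spell out the distribution of the factor $\frac{n}{r}$ and the absorption of the bounded terms, which the paper leaves implicit.
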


Considering successively the cases \eqref{4s}, \eqref{4z}, \eqref{4l} and \eqref{4n} and associating with each of them the estimates \eqref{2t1}, \eqref{4k}, \eqref{4m} and \eqref{4r}, respectively, we otain the general statement.

\begin{theorem}[{\bf main theorem}]\label{3t}
	Let $r>2,$ $\beta\in\mathbb R$ and $n\in\mathbb{N}$. Then the formula
	\begin{equation}\label{3t1}
		{\cal E}_{n}(W^r_{\beta,1})= \frac1{n^r}\left(\frac1{\pi(1-e^{-r/n})}
		+\mathcal{O}(1)\delta_{r,n}\right),
	\end{equation}
	holds. Here,
	\begin{equation}\label{3t2}
		\delta_{r,1}:=e^{-r},\quad \delta_{r,n}:=
		\left\{
		\begin{array}{cl}
			\dfrac n{r(r-2)},& 2< r\le n+1,\\ \ \\
			\dfrac r{n^2}e^{-r/n},&n+1\le r\le n^2,\\ \ \\
			\left(1+\dfrac1n\right)^{-r},
			& \quad \ n^2\le r,\\
		\end{array}
		\right.
		n\ge 2.
	\end{equation}
	and $\mathcal{O}(1)$ is a quantity uniformly bounded in all considered parameters.
\end{theorem}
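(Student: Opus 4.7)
My plan is a case analysis that glues together the four estimates already made available earlier in the paper. I would partition the parameter region into the strips \eqref{4s}, \eqref{4z}, \eqref{4l} and \eqref{4n}, and in each strip invoke the corresponding formula \eqref{2t1}, \eqref{4k}, \eqref{4m} and \eqref{4r}, respectively. All four inputs carry the same leading term $\frac{1}{\pi(1-e^{-r/n})}$, so the task reduces to recognising each residual error as an instance of $\mathcal O(1)\delta_{r,n}$ in the sense of \eqref{3t2}.

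Two of the strips are essentially tautological: formula \eqref{4m} is already of the form $\mathcal O(1)\frac{r}{n^2}e^{-r/n}$, matching the second branch of \eqref{3t2}, while \eqref{4r} is already of the form $\mathcal O(1)(1+1/n)^{-r}$, matching the third branch for $n\ge 2$. The first branch ($2<r\le n+1$) is obtained by joining two sub-estimates: on $2<r\le\sqrt n+1$ Theorem \ref{2t} supplies the bound $\mathcal O(1)\frac{n}{r(r-2)}$ directly, and on $\sqrt n+1\le r\le n+1$ formula \eqref{4k} supplies $\mathcal O(1)\frac{n}{r^2}$, which is absorbed into $\frac{n}{r(r-2)}$ via the trivial inequality $r(r-2)\le r^2$. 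The case $n=1$, for which automatically $r>n^2$, is then handled as a separate final observation using \eqref{4r} together with \eqref{4o}.

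The main (modest) obstacle I anticipate is not any single case but rather the bookkeeping needed to combine the four individually uniform constants into a single $\mathcal O(1)$ valid on the whole parameter range. To this end, I would verify that $\delta_{r,n}$ itself is consistent up to absolute constants across its two internal interfaces $r=n+1$ and $r=n^2$: at $r=n+1$ one compares $\frac{n}{r(r-2)}\sim\frac1n$ with $\frac{r}{n^2}e^{-r/n}\sim\frac{e^{-1}}{n}$, and at $r=n^2$ one compares $\frac{r}{n^2}e^{-r/n}=e^{-n}$ with $(1+1/n)^{-n^2}$, which match up to an absolute constant through \eqref{4o}. Once this matching is secured, the four pieces assemble into \eqref{3t1}--\eqref{3t2}.
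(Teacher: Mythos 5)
Your argument is exactly the paper's: the entire published proof of Theorem~\ref{3t} is the single sentence preceding its statement, which glues the estimates \eqref{2t1}, \eqref{4k}, \eqref{4m} and \eqref{4r} over the respective ranges \eqref{4s}, \eqref{4z}, \eqref{4l} and \eqref{4n}. You supply the only nontrivial bookkeeping the paper leaves implicit --- absorbing the $\mathcal{O}(1)\,n/r^{2}$ error of \eqref{4k} into $n/(r(r-2))$ on $\sqrt n+1\le r\le n+1$ via $r(r-2)\le r^{2}$ --- and your consistency checks at the interfaces $r=n+1$ and $r=n^{2}$ are correct (and harmless, though not strictly needed, since each regime uses its own branch of $\delta_{r,n}$).

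The one step that does not go through as you describe it is the case $n=1$. There \eqref{4o} reads $e^{-r}\le 2^{-r}\le e^{-r/2}$, so the error term $\mathcal{O}(1)\left(1+\frac1n\right)^{-r}=\mathcal{O}(1)\,2^{-r}$ delivered by \eqref{4r} is \emph{larger} than the target $\delta_{r,1}=e^{-r}$ by the unbounded factor $(e/2)^{r}$; the inequality \eqref{4o} bounds $2^{-r}$ from \emph{below} by $e^{-r}$, which is the wrong direction for the absorption you need. So the branch $\delta_{r,1}=e^{-r}$ requires a separate, sharper argument for $n=1$ (e.g.\ returning to \eqref{1td1} directly) rather than a corollary of \eqref{4r} and \eqref{4o}. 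To be fair, the paper's own one-sentence proof is equally silent on this point, so this is a defect you share with, rather than introduce relative to, the source.
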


The formula \eqref{3t1} is in fact the corresponding for class $W^r_{\beta,1}$ analogue of Stechkin's estimate \eqref{6} obtained earlier for classes $W^r_{\beta,\infty}$.

Incidentally, we note that based on the estimate \eqref{1td1} used in the proof of the theorem \ref{1t}, we can write the estimate of the quantity ${\cal E}_{n}(W^r_{\beta,1})$ in the  integral form.

Let
\begin{equation}\label{6a}
	\zeta(s,q)=\sum\limits_{k=0}^\infty \frac1{(q+k)^{s}},\quad Re(s) > 1,\ Re(q) > 0,
\end{equation}
be the Hurwitz zeta function.

Given \eqref{6a}, we can write
\begin{equation}\label{6b}
	\sum\limits_{k=n}^\infty\frac{1}{k^r}= \zeta(r,n),
\end{equation}
\begin{equation}\label{6v}
	\frac1n\sum\limits_{k=1}^\infty\frac{k}{(k+n)^r}= \frac1n\zeta(r-1,n+1)-\zeta(r,n+1).
\end{equation}

Then the formula \eqref{1td1} will be written in the form
\begin{equation}\label{6g}
	{\cal E}_{n}(W^r_{\beta,1})=\frac1\pi\left(\zeta(r,n)+\theta R_{r,n}\right),\quad\theta\in[-1,0],
\end{equation}
where
\begin{equation}\label{38v}
	R_{r,n}=\frac1n\zeta(r-1,n+1)-\zeta(r,n+1).
\end{equation}

Since (see, for example, \cite[p.~1086, formula 9.511]{Gradshteyn_Ryzhik1963})
\begin{equation}\label{38zh}
	\zeta(s,q)=\frac1{\Gamma(s)}\int\limits_0^\infty\frac{t^{s-1}e^{-qt}}{1-e^{-t}}dt,\quad Re(s) > 1,\ Re(q) > 0,
\end{equation}
then we can write the estimate \eqref{6g} in integral form
\begin{equation}\label{38d}
	{\cal E}_{n}(W^r_{\beta,1})=\frac1\pi\left( \frac1{\Gamma(r)}\int\limits_0^\infty\frac{t^{r-1}e^{-nt}}{1-e^{-t}}dt+\theta R_{r,n}\right),\quad \theta\in[-1.0],
\end{equation}
where
$$
R_{r,n}=\frac1{n\Gamma(r-1)}\int\limits_0^\infty\frac{t^{r-2}e^{-(n+1)t}}{1-e^{-t}}dt
-\frac1{\Gamma(r)}\int\limits_0^\infty\frac{t^{r-1}e^{-(n+1)t}}{1-e^{-t}}dt=
$$
\begin{equation}
	=\frac1{\Gamma(r)}\left(\frac{r-1}n \int\limits_0^\infty\frac{t^{r-2}e^{-(n+1)t}}{1-e^{-t}}dt
	-\int\limits_0^\infty\frac{t^{r-1}e^{-(n+1)t}}{1-e^{-t}}dt\right).
\end{equation}
However, by virtue of formulas \eqref{1td5} and \eqref{38zh}, the remainder $R_{r,n}$ in formulas \eqref{6g} and \eqref{38d} can also be represented as follows:
\begin{equation}
	R_{r,n}=\frac1n\zeta(r-1,n)-\zeta(r,n)=
	$$
	$$
	=
	\frac1{\Gamma(r)}\left( \frac{r-1}{n}\int\limits_0^\infty\frac{t^{r-2}e^{-nt}}{1-e^{-t}}dt-\int\limits_0^\infty\frac{t^{r-1}e^{-nt}}{1-e^{-t}}dt\right).
\end{equation}

\section*{Acknowledgments.}

This work is a part of the project that has received funding from the EU Horizon 2020 research and innovation programme under the Marie Sklodowska-Curie grant agreement Number 873071, SOMPATY.

This work was also partially supported by the VolkswagenStiftung project ”From Modeling and Analysis to Approximation” and by the grants from the Simons Foundation  (SFI-PD-Ukraine-00014586,~ASS) and (SFI-PD-Ukraine-00014586,~IVS).
\bigskip

\bigskip

CONTACT INFORMATION

\medskip
A.S.~Serdyuk\\01024, Ukraine, Kyiv-4, 3, Tereschenkivska st.\\
serdyuk@imath.kiev.ua

\medskip
I.V.~Sokolenko\\01024, Ukraine, Kyiv-4, 3, Tereschenkivska st.\\
sokol@imath.kiev.ua
\end{document}